\documentclass[ECP,preprint]{ejpecp}

\SHORTTITLE{Distance two for Poisson--binomial point probabilities}
\TITLE{Why Distance Two Is Exceptional for Poisson--Binomial Point Probabilities}
\AUTHORS{Igor~Kleiner\thanks{Technion -- Israel Institute of Technology, Haifa, Israel; Holon Institute of Technology (HIT), Holon, Israel. \EMAIL{igkleiner@gmail.com}. ORCID: \href{https://orcid.org/0000-0002-8361-8505}{0000-0002-8361-8505}.}}
\KEYWORDS{Bernoulli sums ; modes ; Poisson--binomial distribution ; real-rooted polynomials}
\AMSSUBJ{60C05}
\AMSSUBJSECONDARY{60E15 ; 26C10}
\SUBMITTED{}
\ACCEPTED{}
\VOLUME{0}
\YEAR{2026}
\PAPERNUM{0}
\DOI{}

\ABSTRACT{Let $S$ be a finite sum of independent Bernoulli random variables.  We ask
how large the mean must be before the atom at $k+r$ can strictly exceed the
atom at $k$.  For the threshold
$T_{k,r}:=\inf\{\E S:\Pp(S=k+r)>\Pp(S=k)\}$, the adjacent case recovers the
classical Darroch boundary $k+1/(k+2)$, while every gap $r\ge3$ has threshold
$k+1$.  Distance two is the unique nonadjacent exception: $T_{k,2}=k+\Delta_k$,
where $\Delta_k$ and its staircase optimizer are explicit and
$\Delta_k=1/2+1/\sqrt{2k}-1/(4k)+O(k^{-3/2})$.  The proof combines the
classical fixed-mean reduction to shifted binomial laws with an exact discrete
optimization.  The result also yields sharp restrictions on nonadjacent
coefficient reversals in real-rooted polynomials with nonnegative
coefficients.}

\newcommand{\Pp}{\mathbb P}
\newcommand{\E}{\mathbb E}
\newcommand{\R}{\mathbb R}

\begin{document}

\section{Introduction}

Knowing the mean of a count is much easier than knowing its whole distribution.
But what does the mean tell us about two individual probabilities?  Let
\[
S=X_1+\cdots+X_n,\qquad X_i\sim\operatorname{Bernoulli}(p_i),
\]
where the summands are independent and $0\le p_i\le1$.  Fix integers $k\ge0$
and $r\ge1$.  We ask how large $\E S$ must be before $\Pp(S=k+r)$ can exceed
$\Pp(S=k)$.  More precisely, define
\begin{equation}\label{eq:defT}
T_{k,r}:=\inf\bigl\{\E S:\Pp(S=k+r)>\Pp(S=k)\bigr\},
\end{equation}
where the infimum ranges over all finite Bernoulli sums, with no prescribed
number of summands.  Probabilities outside the support are set to zero.
The strict comparison excludes ties between two zero masses.

The adjacent case belongs to the classical theory of the mode.  Darroch
\cite[Theorem~4]{Darroch1964} proved that every mode of a Bernoulli sum lies
within one unit of its mean and obtained the sharper boundary $k+1/(k+2)$ for
a change between $k$ and $k+1$.  Samuels \cite{Samuels1965} studied the
successive differences of these distributions.  Pitman \cite{Pitman1997}
presents these results together with their real-rooted polynomial
interpretation; his bounds for consecutive coefficient ratios are
particularly close to our question.  Baker and Handelman
\cite{BakerHandelman1992} use the ratio of the second-largest to the largest
mass of a unimodal law as a measure of local flatness.  More recently, Gnedin
\cite{Gnedin2024} studied how cheaply an added Bernoulli or Poisson increment
can move the mode.

Distance two is linked to this circle of ideas by a one-line identity.  If
$C\sim\operatorname{Bernoulli}(1/2)$ is independent of $S$, then
\[
\Pp(S+C=k+2)-\Pp(S+C=k+1)
=\frac{\Pp(S=k+2)-\Pp(S=k)}2.
\]
Thus the case $r=2$ asks for the smallest mean of a Bernoulli sum for which a
fixed fair coin is critical for moving the mode past $k+1$.  Gnedin fixes the
base law and optimizes the increment; here the increment is fixed and the base
Bernoulli sum is optimized under a mean constraint.

For a nonadjacent pair, a reversal can happen in two ways: the two positive
masses can cross, or the support can move past the lower index.  These
mechanisms behave differently.  At distance two and $k\ge1$, the crossing
comes first and leaves a genuine discrete optimization; at $k=0$ the two
boundaries coincide.  From distance three onward, the support barrier $k+1$
wins.  We evaluate the crossing level exactly and describe all ties at the
boundary.

The first step is classical.  The extremal idea goes back to Tchebichef
\cite{Tchebichef1846}; Hoeffding's general fixed-mean result
\cite[Theorem~3]{Hoeffding1956} reduces the symmetric multi-affine functional
used here to parameters taking only the values $0$, $1$, and one common
interior value.  Mattner \cite{Mattner2022} gives a modern multi-moment
extension.  The one-moment argument is short enough to include, after which
the problem becomes an explicit discrete optimization.

For $k\ge0$ put
\begin{equation}\label{eq:dk}
 d_k:=\max\left\{2,\left\lfloor\frac{9+\sqrt{8k+33}}8\right\rfloor\right\},
\end{equation}
and define
\begin{equation}\label{eq:Deltak}
\Delta_k:=
\frac{d_k\sqrt{(k+1)(k+2)}-k\sqrt{d_k(d_k-1)}}
{\sqrt{(k+1)(k+2)}+\sqrt{d_k(d_k-1)}}.
\end{equation}

\begin{theorem}[Sharp reversal thresholds]\label{thm:main}
For every $k\ge0$ and $r\ge1$,
\[
T_{k,r}=\begin{cases}
\displaystyle k+\frac1{k+2},&r=1,\\[2mm]
\displaystyle k+\Delta_k,&r=2,\\[2mm]
 k+1,&r\ge3.
\end{cases}
\]
For $r=2$ the boundary is realized by a binomial distribution:
if
\[
q_k:=\frac{\sqrt{(k+1)(k+2)}}
{\sqrt{(k+1)(k+2)}+\sqrt{d_k(d_k-1)}},
\]
then for $B_k\sim\operatorname{Bin}(k+d_k,q_k)$,
\[
\E B_k=k+\Delta_k,
\qquad
\Pp(B_k=k)=\Pp(B_k=k+2).
\]
Increasing $q_k$ gives strict reversal.  In every case,
$\Pp(S=k+r)>\Pp(S=k)$ implies $\E S>T_{k,r}$;
the infimum in \eqref{eq:defT} is not attained by a strict reversal.
\end{theorem}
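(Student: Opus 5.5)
The plan is to fix the mean $\mu=\E S$ and reduce the problem to a one-parameter family of laws. Then I treat the three regimes of $r$ separately, the case $r=2$ being an explicit discrete optimization.

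\emph{Step 1 (fixed-mean reduction).} The functional
\[
F(p_1,\dots,p_n)=\Pp(S=k+r)-\Pp(S=k)
\]
is symmetric and affine in each $p_i$. Fix all parameters except $p_i,p_j$ and hold $p_i+p_j$ constant. Then $F$ is affine in $p_ip_j$, so $F$ is maximized either at equal values $p_i=p_j$ or at an endpoint, where one of the two lies in $\{0,1\}$. Iterating this, as in Hoeffding's theorem, shows that $\sup\{F:\E S=\mu\}$ is attained by a shifted binomial $m+\operatorname{Bin}(N,q)$ with $m+Nq=\mu$. Hence
\[
T_{k,r}=\inf\{\,m+Nq:\ \Pp(m+B=k+r)>\Pp(m+B=k)\,\},\qquad B\sim\operatorname{Bin}(N,q).
\]
The strict-inequality and non-attainment clause then follows from two facts. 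First, if $\E S=T$ and $F>0$, continuity lets us lower some $p_i$ slightly while keeping $F>0$, which lowers the mean below $T$ and is a contradiction. Second, the case $\E S=T$ with $F\le 0$ is excluded trivially.

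\emph{Step 2 (cases $r=1$ and $r\ge3$).} For $r=1$ this is Darroch's bound, which can be recovered in the reduced family. Among shifted binomials with $m\le k$ and $j=k-m$, the optimum should be $m=0$, $N=k+1$. There the ratio condition
\[
\binom{k+1}{k+1}q^{k+1}>\binom{k+1}{k}q^k(1-q)
\]
gives $q>(k+1)/(k+2)$ and mean $(k+1)^2/(k+2)=k+1/(k+2)$.

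For $r\ge3$ there are two routes to a reversal. The first is $m\ge k+1$, which gives mean $\ge k+1$. The second is a binomial with both masses positive, $m\le k$, $N\ge k+r-m$. With $j=k-m$, the condition becomes
\[
\frac{\binom N{j+r}}{\binom Nj}\Bigl(\frac q{1-q}\Bigr)^r>1 .
\]
Writing $x=q/(1-q)$, I would show that this forces $Nq\ge j+1$ whenever $r\ge3$. The tool is log-concavity: consecutive ratios $\rho_i=\frac{N-i}{i+1}x$ must have geometric mean $>1$ over $i=j,\dots,j+r-1$. A direct AM–GM/convexity estimate in $q$ then yields $\mu>k+1$. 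Since $m=k+1$ with a tiny perturbation approaches $k+1$, we get $T=k+1$.

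\emph{Step 3 (case $r=2$, the main obstacle).} Again $m=0$ should be optimal, or the shift can be absorbed by replacing $k$ with $j$; I would need to check monotonicity of $j+\Delta_j$ in $j$. Write $N=k+d$ with $d\ge2$. The tie
\[
\binom N{k+2}q^2=\binom Nk(1-q)^2
\]
gives
\[
\frac q{1-q}=\sqrt{\frac{(k+1)(k+2)}{d(d-1)}}, \qquad q=q(d).
\]
The mean is $\mu(d)=(k+d)q(d)$, which simplifies to the formula for $\Delta_k$ with $d$ in place of $d_k$. The hard part is the exact minimization over integers $d\ge2$. I would study the forward difference $\mu(d+1)-\mu(d)$, prove that it changes sign exactly once, and locate the sign change by comparing algebraically. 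After squaring, this should reduce to a quadratic inequality in $d$, namely roughly $4d^2-9d+(\dots)\le 2k$-type. Its root is $(9+\sqrt{8k+33})/8$, which gives $d_k$ via the floor, with $d\ge2$ enforced by the support constraint $N\ge k+2$. Boundary ties, where the discriminant makes consecutive $d$ equal, correspond to the non-unique optimizers. Finally, the asymptotic form follows by expanding with $d_k\approx\sqrt{k/2}$, but it is not needed for the theorem.
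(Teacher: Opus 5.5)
Your architecture matches the paper's: the Hoeffding reduction to $m+\operatorname{Bin}(N,q)$, the support barrier at $k+1$, the tie $q/(1-q)=\sqrt{(k+1)(k+2)/(d(d-1))}$, and the continuity argument for non-attainment. However, the two steps that carry the $r=2$ line are left open, and the first is aimed at the wrong quantity. For $m+\operatorname{Bin}(j+d,q)$ with $j=k-m$, the mean at the tie is $m+j+\delta_{j,d}=k+\delta_{j,d}$, where $\delta_{j,d}=(dA_j-jH_d)/(A_j+H_d)$, $A_j=\sqrt{(j+1)(j+2)}$ and $H_d=\sqrt{d(d-1)}$. So you need $\min_{j\le k}\Delta_j=\Delta_k$, that is, that the \emph{excess} decreases in $j$. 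Monotonicity of $j+\Delta_j$ is beside the point: it is trivially increasing, and the shift is added back. The paper does not need $d_j$ for this. It first discards $d\ge j+2$, where $\delta_{j,d}\ge1$ and the support barrier already dominates. For fixed $d\le j+1$ it then shows that $\partial_j\delta_{j,d}$ has the sign of $2jd+3d-3j-4-2A_jH_d$. This is negative because $2A_jH_d>2(j+1)(d-1)=(2jd+3d-3j-4)+(j+2-d)$.

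The minimization over $d$ is where your sketch would not go through as described, because squaring $\phi_k(d)>\phi_k(d+1)$ does not yield a quadratic in $d$. Write $\eta=d-\sqrt{d^2-1}$ and $u=H_{d+1}-H_d$. Then $u^2=1+\eta^2$ and $(k+d)H_{d+1}-(k+d+1)H_d=u\bigl(k+\frac{1+\eta}{2}\bigr)$. So the sign is that of $F_d(k)=(1+\eta^2)\bigl(k+\frac{1+\eta}{2}\bigr)^2-(k+1)(k+2)$, whose positive root involves $\sqrt{d^2-1}$ and is not an integer (about $23.78$ for $d=2$). The integer condition $K_{d-1}=8d^2-18d+6\le k$, and hence the floor formula for $d_k$, is exact only because for every $d$ this root lies strictly between $K_d-1$ and $K_d=8d^2-2d-4$. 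The paper proves this by expanding $4F_d(K_d)>0$ and $4F_d(K_d-1)<0$ as polynomials in $\eta\in(0,1/3)$. The same fact shows there are no integer ties, so the ``boundary ties'' with non-unique optimizers you mention do not occur. Your constants are also off: $d_k\sim\sqrt{k/8}$, not $\sqrt{k/2}$.

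There are two lesser points. For $r=1$ you assert the optimizer rather than minimizing $d/(j+d+1)$ over $0\le j\le k$, $d\ge1$. For $r\ge3$, log-concavity alone only gives $\frac{d}{j+1}\cdot\frac{q}{1-q}>1$, which is the $r=1$ bound. The needed input is $d(d-1)\cdots(d-r+1)\le(d-1)^r$ together with $(j+1)\cdots(j+r)>(j+1)^r$. The first follows from AM--GM, which bounds the geometric mean by $d-\frac{r-1}{2}$, at most $d-1$ exactly when $r\ge3$. Together these give $\frac{q}{1-q}>\frac{j+1}{d-1}$, which is precisely $Nq>j+1$.
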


The middle line is the part to understand.  One might expect the boundary to
be reached by $k+2$ trials, as in the adjacent case.  It is not: $d_k=2$ only
for $k\le23$, and then the optimizer grows as $d_k\sim\sqrt{k/8}$.

\begin{remark}[Nondegenerate trials]\label{rem:nondegenerate}
The values in Theorem~\ref{thm:main} are unchanged if the infimum is restricted
to sums for which every Bernoulli parameter satisfies $0<p_i<1$.  For
$r=1,2$ the boundary binomial examples are already nondegenerate.  For
$r\ge3$ an explicit perturbation inside the open cube is given in
Section~\ref{sec:gaps}.
\end{remark}

The exceptional constant has a simple asymptotic form.

\begin{corollary}\label{cor:asymptotic}
As $k\to\infty$,
\[
 d_k=\sqrt{k/8}+O(1)
\]
and
\[
\Delta_k
=\frac12+\frac1{\sqrt{2k}}-\frac1{4k}+O(k^{-3/2}).
\]
\end{corollary}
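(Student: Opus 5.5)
The plan is a direct expansion of the closed forms \eqref{eq:dk} and \eqref{eq:Deltak}. Theorem~\ref{thm:main} is used only for the formulas, not for any further structural information. For the first claim, note that once $k$ is large ($k\ge24$ suffices) the maximum in \eqref{eq:dk} is attained by the floor. Moreover $\sqrt{8k+33}/8=\sqrt{k/8}+O(k^{-1/2})$, so removing the floor costs at most $1$ and the constant $9/8$ is $O(1)$. This gives $d_k=\sqrt{k/8}+\theta_k$ with $\theta_k=O(1)$. That is all we will use about $d_k$; in particular we never use the exact location of the floor.

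For $\Delta_k$, abbreviate $d=d_k$, $A=\sqrt{(k+1)(k+2)}$ and $B=\sqrt{d(d-1)}$. We then expand
\[
A=k+\tfrac32-\tfrac1{8k}+O(k^{-2}),\qquad
B=d-\tfrac12-\tfrac1{8d}-\tfrac1{16d^2}+O(d^{-3}).
\]
Substitute these into $\Delta_k=(dA-kB)/(A+B)$. In the numerator we must keep every term of size $O(1)$, and since $k/d^{j}\asymp k^{1-j/2}$, this requires the terms of $kB$ through $k/d^2$. The result is
\[
\Delta_k=\frac12+\frac{N(d)}{k+d+1+O(d^{-1})},
\qquad
N(d)=d+\frac{k}{8d}-\frac12+\frac{k}{16d^2}+O(k^{-1/2}),
\]
with the $O(k^{-1/2})$ covering the terms $k/d^3$ and the corrections from $A$.

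The key observation is that the function $d\mapsto d+k/(8d)$ is stationary at $d_*=\sqrt{k/8}$, with minimum value $\sqrt{k/2}$. Writing $d=d_*+\theta$ with $\theta=O(1)$, we get
\[
d+\frac{k}{8d}=\sqrt{k/2}+\frac{\theta^2}{d}+O(\theta^3/d^2)=\sqrt{k/2}+O(k^{-1/2}).
\]
Also $k/(16d^2)=\tfrac12+O(k^{-1/2})$, so $N(d)=\sqrt{k/2}+O(k^{-1/2})$ independently of $\theta$; the $-\tfrac12$ and the $+\tfrac12$ cancel. Dividing by $k+d+1=k+\sqrt{k/8}+O(1)$ then gives
\[
\Delta_k-\frac12=\frac{\sqrt{k/2}}{k}\Bigl(1-\frac{1}{\sqrt{8k}}+O(k^{-1})\Bigr)+O(k^{-3/2})
=\frac1{\sqrt{2k}}-\frac1{4k}+O(k^{-3/2}).
\]
This is the claimed expansion, since $\sqrt{k/2}\cdot\sqrt{k/8}/k^2=1/(4k)$.

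The main obstacle is bookkeeping rather than ideas. First, $k/d^2$ is of order one, so the expansion of $B$ must be carried two orders further than naive counting suggests. Second, one has to see why the integer rounding in $d_k$ does not pollute the $1/k$ term. That is exactly the stationarity of $d+k/(8d)$ at $d_*$: an $O(1)$ displacement of $d$ changes $N$ only by $O(\theta^2/d)=O(k^{-1/2})$, hence changes $\Delta_k$ only by $O(k^{-3/2})$. I would make this rigorous by writing $N$ exactly as a function of the real variable $d$ and bounding its second derivative by $O(k/d^3)$ uniformly on $|d-d_*|\le C$.
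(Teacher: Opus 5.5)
Your proposal is correct and follows the paper's own argument: write $d_k=\sqrt{k/8}+\theta_k$ with $\theta_k$ bounded and expand the two radicals in \eqref{eq:Deltak} uniformly in $\theta_k$. The paper reads the scale of $d_k$ off the staircase $K_{d-1}\le k<K_d$ rather than the floor formula, which makes no difference. Your bookkeeping spells out what the paper's one-line sketch leaves implicit. The identity $d+k/(8d)=\sqrt{k/2}+\theta^2/d$ holds exactly, and $k/(16d^2)=1/2+O(k^{-1/2})$, so the rounding in $d_k$ enters only at order $k^{-3/2}$ and the second-derivative bound you mention is not needed.
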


A final corollary recasts the result for real-rooted polynomials.  If
$A(z)=\sum a_jz^j$ has nonnegative coefficients and only real zeros, then the
normalized coefficients form a Poisson--binomial law and $A'(1)/A(1)$ is its
mean.  Thus Theorem~\ref{thm:main} also gives the sharp
first-logarithmic-derivative threshold forced by a nonadjacent coefficient
reversal.

\section{An extremal reduction}

The following elementary lemma is the one-moment case of the
Chebyshev--Hoeffding extremal principle; compare \cite{Hoeffding1956} and the
multi-moment extension in \cite{Mattner2022}.  We record the short pairwise
argument because the same deformation will reappear when we analyze boundary
ties.

\begin{lemma}[Shifted-binomial reduction]\label{lem:reduction}
Let $F:[0,1]^n\to\R$ be symmetric and affine in each coordinate separately.
Fix $\mu\in[0,n]$.  A maximum of $F$ on
\[
\{(p_1,\ldots,p_n)\in[0,1]^n:\ p_1+\cdots+p_n=\mu\}
\]
can be attained at a vector whose coordinates belong to $\{0,q,1\}$ for a
single $q\in(0,1)$.
\end{lemma}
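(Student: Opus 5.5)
The argument is a compactness step followed by a pairwise deformation that pushes two interior coordinates apart until one of them hits $0$ or $1$ or they coincide.

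\emph{Existence of a maximizer.} The constraint set
\[
K_\mu=\{p\in[0,1]^n:\ \textstyle\sum p_i=\mu\}
\]
is compact and nonempty, and $F$ is a polynomial, hence continuous. So the maximum over $K_\mu$ is attained. Among all maximizers we pick one, $p^*$, that has the \emph{fewest} distinct values in the open interval $(0,1)$. We will show that $p^*$ has at most one such value. If it has none, the conclusion holds with any $q$, say $q=1/2$.

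\emph{Pairwise structure.} Fix two indices $i\neq j$ and freeze the other coordinates. Since $F$ is affine in each coordinate separately and symmetric, it can be written in terms of $s=p_i+p_j$ and $t=p_ip_j$:
\[
F=\alpha+\beta s+\gamma t,
\]
where $\alpha,\beta,\gamma$ depend only on the frozen coordinates. Symmetry is what lets us write the term in $p_i$ and $p_j$ as $\beta(p_i+p_j)$ rather than with two different coefficients. Now move along the segment that keeps $s$ fixed: set $p_i=s/2+x$ and $p_j=s/2-x$. Then $t=s^2/4-x^2$, so $F$ is a quadratic in $x$ with leading coefficient $-\gamma$. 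It is concave if $\gamma\ge0$ and convex if $\gamma\le 0$.

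\emph{Collapsing interior values.} Suppose $p^*$ has two distinct interior values, $p_i^*\neq p_j^*$, both in $(0,1)$.
\begin{itemize}
\item \textbf{Case $\gamma<0$.} In $x$, $F$ is strictly convex on the admissible interval, which is the set of $x$ with both coordinates in $[0,1]$. Because $p^*$ is an interior point of this interval, strict convexity gives a strictly larger value at one of the endpoints. This contradicts maximality, so $\gamma<0$ cannot occur.
\item \textbf{Case $\gamma>0$.} $F$ is strictly concave in $x$, so its maximizer along the segment is unique. By the symmetry $x\mapsto -x$ that maximizer is $x=0$. But $p^*$ sits at $x\neq0$, again contradicting maximality.
\item \textbf{Case $\gamma=0$.} $F$ is constant along the segment. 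We can therefore move to $x=0$ (setting both coordinates to $s/2$) or to an endpoint (where one coordinate becomes $0$ or $1$) without changing the value of $F$. Both moves stay inside $K_\mu$.
\end{itemize}

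The main obstacle is the case $\gamma=0$, because moving to the midpoint can \emph{create} a new interior value $s/2$. So simply equalizing the two coordinates does not obviously decrease the number of distinct interior values, and the minimality argument needs more care. Our first attempt is to move instead to an endpoint. There one coordinate becomes $0$ or $1$, and the other takes the value $s$ or $s-1$, which may or may not lie in $(0,1)$. This still does not give a clean decrease, so we switch to a better-behaved potential. Among maximizers with the fewest interior coordinates (counted with multiplicity), we choose one that minimizes $\sum p_i^2$; such a minimizer exists by compactness, since the set of maximizers is closed. In the case $\gamma=0$:
\begin{itemize}
\item Moving to an endpoint strictly reduces the number of interior coordinates, which contradicts the first level of minimality.
\item If the endpoint move happens not to reduce that count, then moving to the midpoint $x=0$ strictly decreases $\sum p_i^2$, since
\[
(s/2+x)^2+(s/2-x)^2=s^2/2+2x^2 .
\]
This contradicts the second level of minimality.
\end{itemize}
In every case, two distinct interior values lead to a contradiction. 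Hence all interior coordinates of the chosen maximizer share a single value $q\in(0,1)$, which proves the lemma.
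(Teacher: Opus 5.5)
Your final argument is correct and is essentially the paper's proof: take a maximizer with the fewest interior coordinates counted with multiplicity, and rule out two distinct interior values through the convex/concave/flat trichotomy for the coefficient of $p_ip_j$ along the segment where $p_i+p_j$ is fixed. The $\sum p_i^2$ tiebreaker and the initial ``fewest distinct values'' choice are superfluous: at either endpoint of that segment at least one of the two interior coordinates lands in $\{0,1\}$ while all other coordinates are unchanged, so the endpoint move always strictly lowers the interior count, which is exactly how the paper disposes of the flat case.
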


\begin{proof}
Choose a maximizer with as few coordinates in $(0,1)$ as possible.  Freeze all
coordinates except two interior coordinates $x,y$.  Symmetry and separate
affine-linearity give
\[
F(x,y)=\alpha xy+\beta(x+y)+\gamma.
\]
On the feasible segment $x+y=c$ this becomes
\[
\alpha x(c-x)+\beta c+\gamma.
\]
If $\alpha=0$, the value is constant along the segment and one of $x,y$ can
be pushed to an endpoint, contradicting minimality.  If $\alpha<0$, the
restriction is convex and cannot have an interior maximum.  Thus, whenever two
interior coordinates remain, $\alpha>0$ and the interior maximum occurs at
$x=y=c/2$.
Applying this to every pair shows that all interior coordinates are equal.
\end{proof}

\begin{lemma}[A strict reversal cannot attain the infimum]\label{lem:open}
If $\Pp(S=k+r)>\Pp(S=k)$, then $\E S>T_{k,r}$.
\end{lemma}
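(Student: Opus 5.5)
The plan is a perturbation argument: starting from a strict reversal, we will produce another strict reversal with strictly smaller mean. Then $\E S$ cannot equal the infimum, and since $\E S\ge T_{k,r}$ by definition, we get $\E S>T_{k,r}$. Fix $S=X_1+\cdots+X_n$ with parameters $p=(p_1,\ldots,p_n)$ and $\Pp(S=k+r)>\Pp(S=k)$. Write
\[
G(p):=\Pp(S=k+r)-\Pp(S=k),
\]
which is a polynomial, hence continuous, in $p$. The set $\{G>0\}$ is therefore open in $[0,1]^n$, so any small perturbation of $p$ keeps the reversal strict. It is enough to find a perturbation that lowers $\sum p_i$.

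\emph{Case 1: some $p_i>0$.} Replace $p_i$ by $p_i-\varepsilon$ for a small $\varepsilon>0$. By continuity $G$ stays positive, while the mean drops by $\varepsilon$. Hence $T_{k,r}\le \E S-\varepsilon<\E S$.

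\emph{Case 2: all $p_i=0$.} Then $S\equiv0$. We have $\Pp(S=k+r)=0$ because $k+r\ge1$, so $G\le 0$ and there is no strict reversal. This case cannot occur.

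One subtlety remains. The definition of $T_{k,r}$ allows any number of summands, so we must check that the perturbed vector is still admissible. It is, since it has the same $n$ and lies in $[0,1]^n$. If we want to respect Remark~\ref{rem:nondegenerate} and stay in the open cube, we can first nudge each coordinate into $(0,1)$, which preserves $G>0$ by openness, and then decrease one coordinate. There is no real obstacle here. The only step needing care is ruling out the degenerate all-zero vector, and that is immediate because the strict inequality excludes ties between two zero masses.
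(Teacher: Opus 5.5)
Your proof is correct and is essentially the paper's argument. The paper multiplies every $p_i$ by some $\lambda<1$ close to $1$ rather than lowering a single positive coordinate, but it relies on the same two facts: continuity of the point probabilities, and the fact that a strict reversal forces $\E S>0$. Your side remark about staying in the open cube is not needed for the lemma; if kept, it requires the upward nudges to be small relative to the final decrease, so that the net mean still drops.
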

\begin{proof}
A strict reversal has $\mu:=\E S>0$.  Replace every $p_i$ by $\lambda p_i$, where $\lambda<1$ is sufficiently close
to $1$.  Point probabilities are continuous polynomials in the parameters, so
the reversal persists, whereas the mean becomes $\lambda\mu<\mu$.  Hence
$T_{k,r}\le\lambda\mu<\mu$.
\end{proof}

For fixed $k,r$, the functional
\[
F(p_1,\ldots,p_n)=\Pp(S=k+r)-\Pp(S=k)
\]
is symmetric and multi-affine.  Therefore, if a strict reversal is possible at
a given mean, it is possible for a law of the form
\begin{equation}\label{eq:shiftbin}
S=\ell+\operatorname{Bin}(m,q).
\end{equation}
Thus the fixed-mean comparison reduces to shifted binomial laws.

\section{After the reduction: the boundary formula}

Take $m\ge1$ and $0<q<1$ in \eqref{eq:shiftbin}.  Purely deterministic
laws are considered separately below.  Assume first that $\ell\le k$, and put
\[
s:=k-\ell,\qquad d:=m-s.
\]
For a strict reversal we need $m\ge s+r$, or $d\ge r$; otherwise
$\Pp(S=k+r)=0$.  Under this condition $\Pp(S=k)>0$, so division by that
probability is legitimate.  Write
\[
t:=\frac{q}{1-q}.
\]
Then
\begin{equation}\label{eq:ratio}
\frac{\Pp(S=k+r)}{\Pp(S=k)}
=
\frac{\binom{m}{s+r}}{\binom{m}{s}}\,t^r.
\end{equation}
Hence equality of the two atoms occurs at
\begin{equation}\label{eq:rho}
 t=\rho_{s,d,r}
 :=\left(\prod_{j=1}^r\frac{s+j}{d-j+1}\right)^{1/r}.
\end{equation}
Since $m=s+d$ and $\ell=k-s$,
\begin{equation}\label{eq:meanexcess}
\E S-k
=\frac{dt-s}{1+t},
\end{equation}
which is strictly increasing in $t$.  Thus the mean excess at the boundary is
\begin{equation}\label{eq:delta-master}
\delta_{s,d,r}
:=\frac{d\rho_{s,d,r}-s}{1+\rho_{s,d,r}}.
\end{equation}

There is one other way to obtain a reversal: move the support past $k$.  If
$\ell\ge k+1$, then $\E S\ge k+1$.  Conversely, for $r=1$ the
constant $S=k+1$ already gives a strict reversal.  For $r\ge2$, the law
$(k+1)+\operatorname{Bin}(r-1,q)$ gives a strict reversal and has mean
decreasing to $k+1$ as $q\downarrow0$.  Deterministic laws cannot give a
smaller reversal mean.  Together with Lemma~\ref{lem:reduction}, this proves
\begin{equation}\label{eq:master-inf}
T_{k,r}=k+
\min\left\{1,
\inf_{\substack{0\le s\le k\\ d\ge r}}
\delta_{s,d,r}
\right\}.
\end{equation}
Here and below the indices $s,d$ in an infimum are integers.  For each such
pair, raising $t$ slightly above $\rho_{s,d,r}$ gives a strict reversal.
Thus \eqref{eq:master-inf} supplies both the lower bound and sharpness.  From
here the problem is concrete: minimize the positive-mass crossing level and
compare it with the support barrier $1$.

\section{Adjacent atoms and gaps at least three}\label{sec:gaps}

For $r=1$,
\[
\rho_{s,d,1}=\frac{s+1}{d},
\qquad
\delta_{s,d,1}=\frac{d}{s+d+1}.
\]
The minimum is at $s=k,d=1$, so
\[
T_{k,1}=k+\frac1{k+2}.
\]
This is Darroch's boundary \cite[Theorem~4]{Darroch1964}; the corresponding
extremal vector is also recorded by Gnedin \cite[Sec.~5.1]{Gnedin2024}.  We
include the adjacent case only to place all three gap regimes in one formula.

Now suppose $r\ge3$.  Here the product structure is already enough to settle
the comparison.  From \eqref{eq:rho},
\[
\rho_{s,d,r}^r
=\frac{(s+1)(s+2)\cdots(s+r)}{d(d-1)\cdots(d-r+1)}.
\]
The numerator is strictly greater than $(s+1)^r$.  For the denominator,
\[
d(d-2)\le(d-1)^2,
\]
and all remaining factors are at most $d-1$.  Hence
\[
d(d-1)\cdots(d-r+1)\le(d-1)^r,
\]
so
\begin{equation}\label{eq:rho-lower}
\rho_{s,d,r}>\frac{s+1}{d-1}.
\end{equation}
The map $\rho\mapsto(d\rho-s)/(1+\rho)$ is increasing.  Substituting the
right-hand side of \eqref{eq:rho-lower} gives exactly $1$.  Consequently
\[
\delta_{s,d,r}>1,
\]
and \eqref{eq:master-inf} yields $T_{k,r}\ge k+1$.

For the reverse inequality, fix $0<\varepsilon<r-1$ and take
\[
S=(k+1)+\operatorname{Bin}\left(r-1,\frac{\varepsilon}{r-1}\right).
\]
Then $\E S=k+1+\varepsilon$, $\Pp(S=k)=0$, and
$\Pp(S=k+r)>0$.  Therefore
\[
T_{k,r}=k+1\qquad(r\ge3).
\]

This conclusion does not rely on deterministic trials.  Let $q\downarrow0$ and
put
\[
\delta:=\frac{q^{r-1}}{4(k+1)},
\qquad
S_q\sim\operatorname{Bin}(k+1,1-\delta)
      +\operatorname{Bin}(r-1,q),
\]
with the two binomials independent.  Every Bernoulli parameter now lies in
$(0,1)$ and
\[
\E S_q=k+1+(r-1)q-(k+1)\delta\longrightarrow k+1
\]
from above.  Moreover,
\[
\Pp(S_q=k+r)=q^{r-1}(1+o(1)),
\qquad
\Pp(S_q=k)=\frac14 q^{r-1}(1+o(1)),
\]
so the strict reversal holds for all sufficiently small $q$.  This proves the
claim in Remark~\ref{rem:nondegenerate}.

\section{The exceptional gap \texorpdfstring{$r=2$}{r=2}}

For $r=2$ write
\[
A_s:=\sqrt{(s+1)(s+2)},
\qquad
H_d:=\sqrt{d(d-1)}.
\]
At equality of the two atoms,
\[
t=\frac{A_s}{H_d},
\]
and \eqref{eq:delta-master} becomes
\begin{equation}\label{eq:delta2sd}
\delta_{s,d,2}
=\frac{dA_s-sH_d}{A_s+H_d}.
\end{equation}

\subsection{The optimal deterministic shift is zero}

First observe from \eqref{eq:delta2sd} that
\begin{equation}\label{eq:delta-less-one}
\delta_{s,d,2}<1
\quad\Longleftrightarrow\quad
 d<s+2.
\end{equation}
For $k\ge1$, the candidate $s=k,d=2$ already gives a value below $1$.
Thus any minimizer relevant to \eqref{eq:master-inf} must satisfy
$d\le s+1$.

The point of the next calculation is to show that no deterministic shift
can improve the crossing level.  Fix $d$ and extend $s$ continuously.  With
$A=A_s$ and $H=H_d$,
\[
\frac{\partial}{\partial s}\delta_{s,d,2}
=
\frac{H\bigl((d+s)A'-A-H\bigr)}{(A+H)^2},
\qquad
A'=\frac{2s+3}{2A}.
\]
After multiplication by the positive factor $2A$, the sign is the sign of
\[
2sd+3d-3s-4-2AH.
\]
Since $A>s+1$ and $H>d-1$,
\[
2AH>2(s+1)(d-1),
\]
while
\[
2(s+1)(d-1)-(2sd+3d-3s-4)=s+2-d>0
\]
under $d\le s+1$.  Hence the derivative is negative.  The smallest boundary
mean is therefore obtained at the largest possible $s$, namely $s=k$.  The
case $k=0$ has only $s=0$: \eqref{eq:delta-less-one} gives
$\delta_{0,d,2}\ge1$, with equality at $d=2$.  There is also a direct way to
see this boundary.  For any nonnegative integer-valued $Y$ with $\E Y=1$,
\[
\Pp(Y=2)-\Pp(Y=0)
=-\sum_{j\ge3}(j-1)\Pp(Y=j)\le0.
\]
Thus $T_{0,2}=1$ without using the Bernoulli structure.  The support term $1$
in \eqref{eq:master-inf} therefore does not alter the minimum below.

For $k\ge1$, any candidate below the support threshold satisfies
$2\le d\le s+1\le k+1$, so the relevant set is finite; the infimum is a
minimum.  It follows that
\begin{equation}\label{eq:phidef}
T_{k,2}=k+\min_{d\ge2}\phi_k(d),
\qquad
\phi_k(d):=
\frac{dA_k-kH_d}{A_k+H_d},
\end{equation}
where $A_k=\sqrt{(k+1)(k+2)}$.

\subsection{The exact integer optimizer}

The remaining minimization has an exact staircase structure.

\begin{lemma}\label{lem:staircase}
For $d\ge2$ define
\[
K_d:=8d^2-2d-4.
\]
Then, for every integer $k\ge0$,
\[
\phi_k(d)<\phi_k(d+1)\quad\Longleftrightarrow\quad k<K_d,
\]
and
\[
\phi_k(d)>\phi_k(d+1)\quad\Longleftrightarrow\quad k\ge K_d.
\]
There is no integer tie.
\end{lemma}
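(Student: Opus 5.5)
The plan is to reduce the comparison $\phi_k(d)$ versus $\phi_k(d+1)$ to the sign of one explicit function of $k$. That function crosses zero exactly once, and we locate the crossing between the integers $K_d-1$ and $K_d$.

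First rewrite \eqref{eq:phidef} as
\[
\phi_k(d)=\frac{(d+k)A_k}{A_k+H_d}-k,
\]
which follows from $dA_k-kH_d=(d+k)A_k-k(A_k+H_d)$. Then $\phi_k(d)<\phi_k(d+1)$ is equivalent to
\[
(d+k)(A_k+H_{d+1})<(d+k+1)(A_k+H_d).
\]
This rearranges to $g_d(k)>0$, where
\[
g_d(k):=A_k+H_d-(d+k)\,u_d,\qquad u_d:=H_{d+1}-H_d=\sqrt d\bigl(\sqrt{d+1}-\sqrt{d-1}\bigr).
\]
The reverse strict inequality is likewise equivalent to $g_d(k)<0$. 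So it suffices to show $g_d(k)>0$ for integers $0\le k\le K_d-1$ and $g_d(k)<0$ for integers $k\ge K_d$. This gives both equivalences and excludes integer ties.

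Next, treat $k\ge0$ as a continuous variable. The function $A_k=\sqrt{(k+1)(k+2)}$ is strictly concave, so $g_d$ is strictly concave. Its derivative is $g_d'(k)=A_k'-u_d$, where $A_k'=(2k+3)/(2A_k)$ decreases strictly to $1$. We also have $u_d>1$. To see this, note that $u_d>1$ is equivalent to $\sqrt{d+1}-\sqrt{d-1}>1/\sqrt d$, which follows from $\sqrt{d+1}-\sqrt{d-1}=2/(\sqrt{d+1}+\sqrt{d-1})$ and $\sqrt{d+1}+\sqrt{d-1}<2\sqrt d$. Hence $g_d'(k)\to1-u_d<0$, so $g_d(k)\to-\infty$.

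We also need $g_d(0)>0$, that is, $\sqrt2+H_d>d\,u_d$. Moving the $H_d$ term across, this becomes $\sqrt2>d\sqrt{d(d+1)}-(d+1)\sqrt{d(d-1)}$, and it should follow by rationalizing the right-hand side. Given $g_d(0)>0$, strict concavity gives a unique root $k_d^*>0$, with $g_d>0$ on $[0,k_d^*)$ and $g_d<0$ on $(k_d^*,\infty)$. Therefore the lemma reduces to the two strict numerical inequalities
\[
g_d(K_d-1)>0,\qquad g_d(K_d)<0 .
\]

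These two evaluations are the main obstacle. With $K=K_d=8d^2-2d-4$ we have
\[
A_{K}=\sqrt{(8d^2-2d-3)(8d^2-2d-2)},\qquad A_{K-1}=\sqrt{(8d^2-2d-4)(8d^2-2d-3)},
\]
and the quantities $H_d$, $H_{d+1}$ and $u_d$ involve $\sqrt{d(d-1)}$ and $\sqrt{d(d+1)}$. The plan is to isolate $A_k$ on one side, as $A_k\gtrless(d+k)u_d-H_d$, and check that the right-hand side is positive. Squaring then leaves an inequality whose only remaining irrationality is the cross term $\sqrt{d(d-1)}\sqrt{d(d+1)}=d\sqrt{d^2-1}$. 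Isolating that term and squaring once more gives a polynomial inequality in $d$. I expect its leading terms to cancel, with the difference having a definite sign of lower order, since the true root satisfies $k_d^*\in(K_d-1,K_d)$ only by a bounded margin.

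If the direct squaring becomes unmanageable, I would fall back on sharp two-sided expansions. For instance, $u_d=1+\tfrac1{8d^2}+\tfrac{7}{128d^4}+\cdots$ with explicit remainder bounds, and $A_k=k+\tfrac32-\tfrac1{8k}+\cdots$ similarly. These would verify both signs for $d\ge d_0$, and the finitely many small $d$ would be checked directly.
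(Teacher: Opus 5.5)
Your reduction is sound, but it stops before the lemma's actual content. The following steps are all correct:
\begin{itemize}
\item the identity $\phi_k(d)=(d+k)A_k/(A_k+H_d)-k$;
\item the equivalence of $\phi_k(d)<\phi_k(d+1)$ with $g_d(k)>0$ (in the paper's notation $g_d(k)=A_k-R_d(k)$);
\item strict concavity of $g_d$, and $u_d>1$;
\item $g_d(0)>0$, since rationalizing gives $d\sqrt{d(d+1)}-(d+1)\sqrt{d(d-1)}=d(d+1)/\bigl(d\sqrt{d(d+1)}+(d+1)\sqrt{d(d-1)}\bigr)<1<\sqrt2$.
\end{itemize}
Concavity is a fine substitute for the paper's ``quadratic with positive leading coefficient and negative constant term'' argument.

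This only shows there is a single crossing. The lemma's content is \emph{where} the crossing lies, namely $g_d(K_d-1)>0>g_d(K_d)$ for all $d\ge2$, and you prove neither inequality. You describe a double squaring and say what you expect it to give, or else expansions with unspecified remainder bounds plus an unspecified finite check. Neither is routine here. In the unsquared form the margins are tiny relative to the terms: asymptotically $g_d(K_d)\approx-1/(64d^2)$ while $A_{K_d}\approx 8d^2$, and already $g_2(24)\approx-0.008$. So an expansion proof needs explicit relative error $O(d^{-4})$. The second squaring leads to a polynomial in $d$ of degree about twelve with massive cancellation, and you would also have to track signs before squaring.

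The missing idea is a parametrization that removes $\sqrt{d^2-1}$ rather than squaring it away. Put $\eta:=d-\sqrt{d^2-1}\in(0,1/3)$. Then $u_d^2=1+\eta^2$ and $d-H_d/u_d=(1+\eta)/2$, so $(d+k)u_d-H_d=u_d\bigl(k+(1+\eta)/2\bigr)>0$. Hence the sign of $g_d(k)$ is opposite to that of $F_d(k)=(1+\eta^2)\bigl(k+(1+\eta)/2\bigr)^2-(k+1)(k+2)$. This is exactly what your first squaring produces, but now in a form where the margins are of order one: $4F_d(K_d)\approx1$ and $4F_d(K_d-1)\approx-7$.

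Using $2d=\eta+\eta^{-1}$, $K_d$ becomes a Laurent polynomial in $\eta$. Then $4F_d(K_d)$ and $4F_d(K_d-1)$ collapse to the explicit polynomials $16\eta^6-8\eta^5+9\eta^4-10\eta^3+18\eta^2-2\eta+1$ and $16\eta^6-8\eta^5-7\eta^4-6\eta^3+18\eta^2+2\eta-7$. Their signs are immediate for $0<\eta<1/3$. Without this step, or an equivalent completed computation, the proposal does not prove the lemma.
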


The proof is given in Appendix~\ref{app:staircase}.  Since $K_d$ is strictly
increasing, Lemma~\ref{lem:staircase} implies that $\phi_k(d)$ decreases up to
a unique integer and then increases.  Solving
$K_{d-1}\le k<K_d$ gives exactly \eqref{eq:dk}.  Substituting $d=d_k$ in
\eqref{eq:phidef} proves \eqref{eq:Deltak} and completes the proof of
Theorem~\ref{thm:main}.

\section{What happens at the boundary}\label{sec:boundary}

The infimum concerns strict reversal, but the ties at its boundary explain the
extremizers.  They also reveal a small exception at $k=0$.  The reduction
lemma produces a reduced maximizer, not a classification of every equality
case, so we need one final rigidity argument.

\begin{proposition}[Boundary ties]\label{prop:ties}
Suppose $\E S=T_{k,r}$.  Then $\Pp(S=k+r)\le\Pp(S=k)$.
After zero-parameter trials are removed, equality holds precisely in the
following cases:
\begin{enumerate}
\item For $r=1$, the law is
$\operatorname{Bin}(k+1,(k+1)/(k+2))$.
\item For $r=2$ and $k\ge1$, the law is the binomial $B_k$ of
Theorem~\ref{thm:main}.
\item For $r=2$ and $k=0$, at most two Bernoulli parameters are positive,
and their sum is $1$.
\item For $r\ge3$, $S=k+1$ almost surely; both compared masses are zero.
\end{enumerate}
\end{proposition}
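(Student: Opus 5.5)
The inequality $\Pp(S=k+r)\le\Pp(S=k)$ at $\E S=T_{k,r}$ follows at once from Lemma~\ref{lem:open}: a strict reversal would force $\E S>T_{k,r}$. The substance is the classification of ties. We fix $\mu=T_{k,r}$ and view $F=\Pp(S=k+r)-\Pp(S=k)$ as a symmetric multi-affine function on $\{p:\sum p_i=\mu\}$, after discarding zero-parameter trials. By the first part, $\max F\le 0$. A tie is a point where $F=0$, so it is a \emph{global maximizer}. We will use the pairwise deformation from the proof of Lemma~\ref{lem:reduction} as a rigidity tool at every maximizer, not only at the reduced one.

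\textbf{Step 1 (local structure of a maximizer).} Take any two coordinates $x,y\in(0,1)$ of a maximizer and freeze the rest, so that $F=\alpha xy+\beta(x+y)+\gamma$. If $\alpha\ne0$, the maximum along $x+y=c$ forces $\alpha>0$ and $x=y$. If $\alpha=0$, then $F$ is constant along the segment, and we may push $x$ or $y$ to $0$ or $1$ without leaving the maximizing set. In that case we still obtain maximizers of shifted-binomial form. By \eqref{eq:master-inf} and the strict analysis of Sections~4--5, these have prescribed form: $s=k$, $d=d_k$, $t=\rho$ for $r=2$, $k\ge1$; $s=k$, $d=1$ for $r=1$; and for $r\ge3$ only the support barrier, since $\delta_{s,d,r}>1$ strictly. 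The uniqueness of the integer minimizer uses the no-tie clause of Lemma~\ref{lem:staircase} and the strict monotonicity in $s$. The main task is to exclude $\alpha=0$ when a pair of unequal interior coordinates exists.

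\textbf{Step 2 (excluding degenerate directions).} Suppose a maximizer has unequal interior coordinates $x\ne y$ with $\alpha=0$. Sliding along the segment until one coordinate hits an endpoint keeps $F=0$, and repeating this reaches a shifted binomial $\ell+\operatorname{Bin}(m,q)$. By Step~1 that endpoint is the unique extremal law. We then run the deformation backwards. We compute $\alpha$ at the extremal law, where it equals the second mixed derivative in two of its $q$-coordinates, namely a difference of point probabilities of the law with two trials removed: $\alpha=\Pp(R=k+r-2)-2\Pp(R=k+r-1)+\Pp(R=k+r)$ minus the corresponding expression at $k$. We show it is nonzero, or more simply use strict concavity of $F$ restricted to the line as the endpoint is approached. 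If this is awkward, we will instead use directly that the one-parameter family through the extremal law with $x=q+u$, $y=q-u$ has $F=-\alpha u^2+\dots$, and check $\alpha>0$ there. For $r\ge3$, the endpoint law $S=k+1$ has no interior coordinates. For deterministic coordinates pushed to $1$, we note that the tie forces both masses to vanish at mean $k+1$, hence at least $k+1$ ones and no further positive mass, so $S=k+1$. This yields cases 1, 2 and 4, including the fact that the parameters coincide exactly with $q_k$ and $(k+1)/(k+2)$.

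\textbf{Step 3 ($r=2$, $k=0$).} Here we use the direct identity $\Pp(Y=2)-\Pp(Y=0)=-\sum_{j\ge3}(j-1)\Pp(Y=j)$ at mean $1$. Equality forces $\Pp(S\ge3)=0$, so at most two positive parameters remain, since three positive parameters give $\Pp(S=3)>0$, and their sum is $\E S=1$. Conversely, for $p_1+p_2=1$ we get $\Pp(S=2)=p_1p_2=\Pp(S=0)$, so every such law is a tie. This family explains why $k=0$ is exceptional: $\alpha=0$ genuinely occurs there. The main obstacle is Step~2 for $r=1,2$ with $k\ge1$, namely showing that the degenerate coefficient $\alpha$ cannot vanish at the extremal binomial. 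Our first attempt will be an explicit computation of $\alpha$ at $B_k$ using the ratio formula \eqref{eq:ratio}.
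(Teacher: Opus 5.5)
\textbf{Gap in Step~2.} Your Steps~1 and~3 and the appeal to Lemma~\ref{lem:open} match the paper, but Step~2, which you correctly identify as the crux, targets the wrong coefficient. The coefficient $\alpha$ of a moved pair depends only on the frozen coordinates. In the \emph{last} flat step, the frozen coordinates are the terminal law minus that pair. That step ends with one coordinate at an endpoint, and this endpoint cannot be $1$: the terminal law has $s=k$, so $\ell=0$ and no parameter equals $1$ (you should say this explicitly). Hence the last step ends at a pair $(0,q)$. The frozen sum is $\operatorname{Bin}(m-1,q)$, not the $\operatorname{Bin}(m-2,q)$ you get by deleting two $q$-coordinates. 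What must be shown is
\[
\alpha_{0q}:=b_{k+r}-2b_{k+r-1}+b_{k+r-2}-b_k+2b_{k-1}-b_{k-2}\neq0,
\qquad b_j:=\Pp(\operatorname{Bin}(m-1,q)=j),
\]
with $b_j=0$ for $j<0$.

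The $(q,q)$ coefficient you propose to compute is only the second-order condition at an interior maximum. It says nothing about the boundary direction $(0,q)\to(\varepsilon,q-\varepsilon)$, and the two coefficients genuinely differ:
\begin{itemize}
\item For $r=1$ and $k\ge1$, your coefficient is $2q^{k-2}>0$. By contrast, $\alpha_{0q}=-c(k+2)(k+3)/(2q(k+1)^2)<0$, where $c$ is the terminal mass at $k$.
\item For $r=1$, $k=0$, the terminal law $\operatorname{Bin}(1,1/2)$ has no $(q,q)$ pair at all, yet a flat step ending at $(0,1/2)$ must still be excluded.
\end{itemize}
Your fallback via strict concavity along the line is also unavailable, because $F$ is constant on a flat segment. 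The paper instead computes $q\alpha_{0q}/c$ explicitly. For $r=2$, the tie relation $t^2d(d-1)=(k+1)(k+2)$ reduces it to $-2\{t^2+2(d+1)t-(2k+1)\}/((d+1)(k+d)t)$. This is negative because $(d+1)t>k+1$.

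\textbf{Gap for $r\ge3$.} You assert that a tie at mean $k+1$ forces both compared masses to vanish. However, $\delta_{s,d,r}>1$ rules out positive-mass ties only for shifted binomials, while the tie classification concerns arbitrary Bernoulli sums. Here too you must reverse the last flat step. Since the terminal law is the constant $k+1$, that step ends at a pair $(0,1)$ coming from $(x,1-x)$ with $0<x<1$ and $k$ frozen ones. The law $k+X+Y$ before the step has $F=-x(1-x)<0$, which contradicts flatness.
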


\begin{proof}
The non-strict inequality follows from Lemma~\ref{lem:open}.  First consider
$r=2,k=0$.  The mean identity at $\E S=1$ gives
\[
 \Pp(S=2)-\Pp(S=0)=-\sum_{j\ge3}(j-1)\Pp(S=j).
\]
Thus equality is equivalent to $S\le2$ almost surely, which for a Bernoulli
sum is equivalent to having at most two positive parameters.  In particular,
$\operatorname{Bernoulli}(p)+\operatorname{Bernoulli}(1-p)$ is a boundary
tie for every $p\in[0,1]$, not only for $p=1/2$.

For the other cases, set $F=\Pp(S=k+r)-\Pp(S=k)$ on the fixed-mean slice.
At the threshold the maximum of $F$ is zero, so every equality point is a
maximizer.  If two interior
parameters $x\ne y$ remain, write $F=\alpha xy+\beta(x+y)+\gamma$ as in the reduction lemma.
Stationarity along $x+y=c$ gives $\alpha(y-x)=0$, hence $\alpha=0$.  We can therefore
move the pair to an endpoint without changing $F$, reducing the number of
interior parameters.  Repetition ends at a shifted-binomial maximizer.

For $r=1$, the boundary calculation has the unique minimizing pair
$s=k,d=1$.  For $r=2,k\ge1$, strict decrease in the shift and the staircase
lemma give the unique pair $s=k,d=d_k$.  The mean is in $(k,k+1)$ in both
cases, so the support contains $k$ and a zero-mass tie is impossible.  Thus
the endpoint of the reduction is the indicated binomial, with no parameter
$1$.

It remains to rule out equality points hidden by flat steps of the
reduction.  Suppose at least one such step was used.  Its last nontrivial step
must end with a pair $(0,q)$, because the terminal binomial has no parameter
equal to $1$.  Reverse the step by replacing $(0,q)$ with
$(\varepsilon,q-\varepsilon)$.  If $b_j$ denotes the mass at $j$ of the
remaining Bernoulli sum, then the varying pair has generating factor
\[
(1-\varepsilon+\varepsilon z)(1-q+\varepsilon+(q-\varepsilon)z)
=(1-q+qz)+(q\varepsilon-\varepsilon^2)(z-1)^2.
\]
Hence the directional derivative of $F$ at $\varepsilon=0$ is
\[
D=q\{b_{k+r}-2b_{k+r-1}+b_{k+r-2}-b_k+2b_{k-1}-b_{k-2}\},
\]
with missing-index masses interpreted as zero.  Let $c$ be the terminal
binomial mass at $k$.  For $r=1$ this gives
\[
 \frac Dc=-\frac{(k+2)(k+3)}{2(k+1)^2}<0.
\]
For $r=2$, write $d=d_k$, $t=\sqrt{(k+1)(k+2)/(d(d-1))}$ and
$q=t/(1+t)$.  Then
\begin{align*}
 \frac Dc
 &=\frac{dt}{k+1}-\frac{k}{(d+1)t}-\frac{2(1+t)^2}{(k+d)t}\\
 &=-\frac{2\{t^2+2(d+1)t-(2k+1)\}}{(d+1)(k+d)t}<0.
\end{align*}
The sign follows from $(d+1)t>k+1$.  A flat step would have derivative zero,
so in either case we have a contradiction.  The starting vector was already
the stated binomial vector.

For $r\ge3$, \eqref{eq:rho-lower} is strict, so no shifted binomial with
both compared masses positive can tie at mean $k+1$.  A shifted-binomial
zero-mass tie at this mean must be the constant $k+1$.  A last reduction
step ending there would replace interior parameters $x,1-x$ by $0,1$.
Before that step the law is $k+X+Y$ with $0<x<1$, giving
$F=-x(1-x)<0$, again a contradiction.  Thus the original law is constant.
\end{proof}

\section{Asymptotics}

The staircase itself already reveals the relevant scale:
\[
d_k=\sqrt{k/8}+O(1).
\]
Write $d_k=\sqrt{k/8}+b_k$, where $(b_k)$ is bounded.  Expanding the two
radicals in \eqref{eq:Deltak}, uniformly for bounded $b_k$, gives
\[
\Delta_k
=\frac12+\frac1{\sqrt{2k}}-\frac1{4k}+O(k^{-3/2}),
\]
which proves Corollary~\ref{cor:asymptotic}.

\section{Real-rooted polynomials}

Let
\[
A(z)=\sum_{j=0}^n a_jz^j,
\qquad a_j\ge0,
\qquad A(1)>0,
\]
and suppose that all zeros of $A$ are real.  Then they are nonpositive, and
after normalization the coefficient sequence is the law of a sum of
independent Bernoulli variables; see \cite{Pitman1997}.  In particular,
\[
\mu:=\frac{A'(1)}{A(1)}
\]
is the mean of that Bernoulli sum.

\begin{corollary}[Sharp nonadjacent coefficient reversal]\label{cor:poly}
If $a_{k+r}>a_k$, then
\[
\frac{A'(1)}{A(1)}>T_{k,r},
\]
where $T_{k,r}$ is given in Theorem~\ref{thm:main}.  Every constant in this
statement is sharp.
\end{corollary}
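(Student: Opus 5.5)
The plan is to reduce to Theorem~\ref{thm:main} via the standard correspondence between real-rooted polynomials with nonnegative coefficients and Bernoulli sums, and then to read sharpness off the extremal examples already constructed.

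\emph{Step 1: the polynomial is a Bernoulli sum.} Since $a_j\ge0$ and $A(1)>0$, $A$ has no positive zero, because $A(x)>0$ for $x>0$. Write $A(z)=a_n\prod_{i=1}^n(z+c_i)$ with $c_i\ge0$, where $n$ is the degree and $a_n>0$. Setting $p_i:=1/(1+c_i)\in(0,1]$ gives
\[
\frac{A(z)}{A(1)}=\prod_{i=1}^n\bigl(1-p_i+p_iz\bigr).
\]
Hence $\Pp(S=j)=a_j/A(1)$ for independent $X_i\sim\operatorname{Bernoulli}(p_i)$ and $S=\sum X_i$. Differentiating at $z=1$ gives $\E S=A'(1)/A(1)$.

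\emph{Step 2: apply the theorem.} The hypothesis $a_{k+r}>a_k$ becomes $\Pp(S=k+r)>\Pp(S=k)$. Indices beyond $n$ have zero coefficient, which matches the convention for probabilities outside the support. Theorem~\ref{thm:main} (equivalently Lemma~\ref{lem:open}) then gives $\E S>T_{k,r}$, which is the claimed strict inequality.

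\emph{Step 3: sharpness.} The converse direction shows that every Bernoulli sum yields an admissible polynomial. Take $A(z)=\prod_i(1-p_i+p_iz)$. Its zeros are real, its coefficients are nonnegative, and $A(1)=1$. By the definition of $T_{k,r}$ as an infimum, there are strict reversals with mean arbitrarily close to $T_{k,r}$, and these transfer directly. For concreteness I would name the families:
\begin{itemize}
\item[] for $r=1$, $\operatorname{Bin}(k+1,q)$ with $q\downarrow(k+1)/(k+2)$;
\item[] for $r=2$, $\operatorname{Bin}(k+d_k,q)$ with $q\downarrow q_k$;
\item[] for $r\ge3$, the nondegenerate family $S_q$ of Section~\ref{sec:gaps}.
\end{itemize}
So no constant $T'>T_{k,r}$ can replace $T_{k,r}$.

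The only subtle point, which I expect to be the main obstacle if any, is Step 1. One must check that degenerate parameters $p_i=1$, arising from zeros at $0$, are allowed. They are, since the theorem places no restriction on the $p_i$. One should also note that zero parameters never arise from factors, because the degree is exactly $n$.
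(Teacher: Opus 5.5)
Your proposal is correct and follows the paper's route. You identify the normalized coefficients of a real-rooted polynomial with nonnegative coefficients with a Poisson--binomial law of mean $A'(1)/A(1)$, apply Theorem~\ref{thm:main} (with strictness from Lemma~\ref{lem:open}), and for sharpness carry extremal Bernoulli-sum examples back to polynomials. Your explicit factorization and your approximating families for all three regimes spell out what the paper states briefly, since the paper names only the $r=2$ boundary binomial $(1-q_k+q_kz)^{k+d_k}$.
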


For $r=2$, equality at the boundary is furnished, up to an irrelevant positive
constant, by
\[
A(z)=(1-q_k+q_k z)^{k+d_k},
\]
for which $a_{k+2}=a_k$ and $A'(1)/A(1)=k+\Delta_k$.

\section{Discussion}

The fair-coin identity puts the exceptional case directly into the language of
mode transport: the distance-two comparison is an adjacent comparison with one
success probability fixed at $1/2$.  Baker--Handelman's ratio \cite{BakerHandelman1992} records how flat a unimodal
peak is, while Gnedin's peak skewness \cite{Gnedin2024} records the smallest
Bernoulli parameter that moves the leading mode of a fixed base law.  Here the perturbing parameter is
fixed at $1/2$, and the quantity being minimized is instead the mean of the
base Bernoulli sum.  At the distance-two boundary the optimizing law has unique
mode $k+1$ and equal flanks at $k$ and $k+2$.  Thus a fair Bernoulli is exactly
critical for moving that mode: after convolution, the masses at $k+1$ and
$k+2$ tie.  In Gnedin's terminology \cite{Gnedin2024}, the critical peak skewness is $1/2$.
Applying the unrestricted refined Darroch bound to $S+C$ gives only
\[
 \Pp(S=k+2)>\Pp(S=k)
 \quad\Longrightarrow\quad
 \E S>k+\frac12+\frac1{k+3}.
\]
The exact value $k+\Delta_k$ is the cost of retaining that fixed-coin
constraint.

More generally, an independent uniform variable $U$ on
$\{0,\ldots,r-1\}$ satisfies
\[
 \Pp(S+U=k+r)-\Pp(S+U=k+r-1)
 =\frac{\Pp(S=k+r)-\Pp(S=k)}r.
\]
For $r=2$, adding $U$ stays within the class of Bernoulli sums.  For
$r\ge3$, its generating polynomial $1+z+\cdots+z^{r-1}$ has nonreal zeros,
so this closure is lost.  The exact product estimate in Section~\ref{sec:gaps}
is what settles those larger gaps.

The same product also explains why two is the exceptional gap.  For $r\ge3$
the leading factor $d$ in $d(d-1)\cdots(d-r+1)$ can be paired with $d-2$ and
absorbed by $(d-1)^2$ through $d(d-2)\le(d-1)^2$.  At $r=2$ there is no
$d-2$ factor, so this one-line comparison is unavailable and the discrete
optimization survives.

The staircase starts with $d_k=2$ for $0\le k\le23$, $d_k=3$ for
$24\le k\le61$, and $d_k=4$ for $62\le k\le115$.
Its scale $d_k\sim\sqrt{k/8}$ explains the $k^{-1/2}$ correction to $1/2$.
At the boundary, the optimizing binomial is unique for $k\ge1$; the family
of complementary pairs at $k=0$ is the only exception for distance two.

\begin{remark}[A Poisson component]
The thresholds also extend to $S=Z+\sum_{i=1}^n X_i$, where
$Z\sim\operatorname{Poi}(\lambda)$ is independent and $\lambda\ge0$.
Replace $Z$ by $\operatorname{Bin}(N,\lambda/N)$ with $N>\lambda$.
The mean is unchanged, point probabilities converge, and a strict reversal
persists for all sufficiently large $N$.  The finite-sum bound therefore
still applies, including its strictness.  Conversely, the finite binomial
examples already establish sharpness.  Even when $\lambda>0$ is required,
adding an arbitrarily small Poisson component to a strict finite example
preserves the reversal by continuity.  This extends the thresholds, not the
equality classification in Proposition~\ref{prop:ties}.
\end{remark}

\begin{remark}[A fixed number of trials]
If the number $n$ of Bernoulli trials is prescribed and zero parameters are
allowed, the same thresholds are already available for $n\ge k+1$ when
$r=1$, $n\ge k+d_k$ when $r=2$, and $n\ge k+r$ when $r\ge3$.  Below these
values the constrained threshold can only increase (and can be infinite when
the upper atom lies outside the support).
\end{remark}

The original question therefore has a complete explicit answer.  A first-moment
constraint controls the order of two specified atoms, with a binomial crossing
at distance two and a support threshold at every larger distance.  The
exceptional constant records exactly where those two mechanisms part company.

\appendix
\section{Proof of the staircase lemma}\label{app:staircase}

Set
\[
A:=\sqrt{(k+1)(k+2)},
\qquad
H_d:=\sqrt{d(d-1)}.
\]
A common-denominator calculation gives
\begin{align*}
\phi_k(d)-\phi_k(d+1)
&=
\frac{A}{(A+H_d)(A+H_{d+1})}\\
&\quad\times
\left[(k+d)H_{d+1}-(k+d+1)H_d-A\right].
\end{align*}
Thus the sign is the sign of $R_d(k)-A$, where
\[
R_d(k):=(k+d)H_{d+1}-(k+d+1)H_d.
\]
Put
\[
\eta:=d-\sqrt{d^2-1}.
\]
For $d\ge2$ one has $0<\eta<1/3$.  If $u:=H_{d+1}-H_d$, squaring and simplifying gives
\[
u^2=1+\eta^2,
\qquad
 d-\frac{H_d}{u}=\frac{1+\eta}{2}.
\]
Consequently
\[
R_d(k)=u\left(k+\frac{1+\eta}{2}\right).
\]
Because both sides are positive, the sign of $R_d(k)-A$ equals the sign of
\[
F_d(k):=
(1+\eta^2)\left(k+\frac{1+\eta}{2}\right)^2-(k+1)(k+2).
\]
This is a quadratic in $k$ with positive leading coefficient and negative
constant term, hence it has exactly one positive zero.

Now set $K_d=8d^2-2d-4$.  Using $2d=\eta+\eta^{-1}$ and simplifying gives
\begin{align*}
4F_d(K_d)
&=16\eta^6-8\eta^5+9\eta^4-10\eta^3
 +18\eta^2-2\eta+1\\
&=(1-2\eta)+\eta^2(18-10\eta)
 +\eta^4(9-8\eta)+16\eta^6>0,
\end{align*}
while
\begin{align*}
4F_d(K_d-1)
&=16\eta^6-8\eta^5-7\eta^4-6\eta^3
 +18\eta^2+2\eta-7\\
&<-7+2\eta+18\eta^2+16\eta^6<0,
\end{align*}
since $0<\eta<1/3$.  The unique positive zero of $F_d$ therefore lies
strictly between $K_d-1$ and $K_d$, which proves the lemma.

\end{document}